\newtheorem{rema}{Remark}
\newtheorem{defi}{Definition}
\newtheorem{lemm}{Lemma}
\newtheorem{theo}{Theorem}
\newtheorem{coro}{Corollary}
\newcommand{\C}[1][]{\ensuremath{{\mathbb{C}^{#1}} }}
\newcommand{\R}[1][]{\ensuremath{{\mathbb{R}^{#1}} }}
\renewcommand{\S}[1][]{\ensuremath{{\mathbb{S}^{#1}} }}
\newcommand{\X}[1][]{\ensuremath{{\mathbb{Q}^{#1}} }}
\newcommand{\D}[1][]{\ensuremath{{\mathbb{D}^{#1}} }}
\def\Re{ \mathrm{Re}\, }
\def\Im{ \mathrm{Im}\, }
\renewcommand{\j}{\ensuremath{J}}
\renewcommand{\L}{{\cal L}}
\newcommand{\M}{{\cal M}}
\newcommand{\s}{{\cal S}}
\newcommand{\<}{\langle}
\renewcommand{\>}{\rangle}
\newcommand{\ga}{\gamma}
\newcommand{\pa}{\partial}
\newcommand{\al}{\alpha}
\newcommand{\eps}{\epsilon}
\newcommand{\te}{\theta}
\newcommand{\ka}{\kappa}
\newcommand{\la}{\lambda}
\newcommand{\be}{\beta}
\newcommand{\si}{\sigma}
\newcommand{\ove}{\overline}
\date{}
\title{Lagrangian submanifolds in para-complex Euclidean space}
\author{ Henri Anciaux\footnote{Universit\'e Libre de Bruxelles}, Maikel Samuays\footnote{Universidade de S\~ao Paulo; supported by  FAPESP (2012/02724-3)}}
\begin{document}

\maketitle

\centerline{\textbf {\large{Abstract}}}

\bigskip

{\small 

We address the study of some curvature equations for distinguished submanifolds in para-K\"ahler geometry. We first observe that a para-complex submanifold of a para-K\"ahler manifold is minimal. Next we describe the extrinsic geometry of Lagrangian submanifolds in the para-complex Euclidean space $\D^n$ and discuss a number of examples, such as graphs and normal bundles. We also
characterize Lagrangian surfaces of $\D^2$ which are minimal and have indefinite metric. Finally we describe  those Lagrangian self-similar solutions of the Mean Curvature Flow which are $SO(n)$-equivariant. }

\bigskip

\centerline{\small \em 2010 MSC: 53A10, 53D12
\em }

\section*{Introduction} \
Symplectic manifolds enjoy a distinguished class of submanifolds, namely \em Lagrangian submanifolds. \em They are defined as those submanifolds of maximal dimension (half the dimension of the ambient space) such that the symplectic form vanish on it. In the K\"ahler case, it is interesting to study the metric properties of Lagrangian submanifolds. In particular, Lagrangian submanifolds which are in addition \em minimal, \em i.e., critical points of the volume functional attached to the metric, enjoy  interesting  properties. For example, in complex Euclidean space $\C^n$ (or more generally in a \em Calabi-Yau \em manifold), a locally defined angle function, the \em Lagrangian angle, \em is attached to any Lagrangian submanifold, and minimal ones are characterized by the constancy of their Lagrangian angle. Recently, the study of Lagrangian submanifolds in pseudo-Riemannian K\"ahler manifolds has been addressed (see \cite{MyB},\cite{An3}). 

\medskip

Para-complex geometry, an alternative to complex geometry, is the study of manifolds $\M$ endowed with a $(1,1)$-tensor $J$ satisfying $J^2=Id$ (instead of the usual relation $J^2=-Id$ characterizing complex geometry), satisfying in addition the rank condition $dim (Ker(J - Id)) = dim(Ker(J+Id))=\displaystyle\frac{1}{2} dim \, \M .$ The model space of para-complex geometry is the Cartesian product $\D^n$, where $\D$ is the module of para-complex numbers (see next section for the precise definition). Using the identification $\D^n \simeq \R^{2n} \simeq T^* \R$, we may also consider a natural symplectic structure $\omega$ on $\D^n$. The pair $(J, \omega)$ then defines a pseudo-riemannian metric by the relation $\<\cdot,\cdot\>:=\omega (\cdot, J\cdot)$, which happens to have neutral signature and makes the para-complex structure $J$ compatible in a suitable sense (see next section for more detail).

\medskip

This paper is devoted to the study of two classes of submanifolds 
 that appear naturally in para-K\"ahler geometry, namely para-complex and Lagrangian submanifolds. We first prove that para-complex submanifolds are minimal (like complex submanifolds in  K\"ahler geometry), but unstable (unlike complex submanifolds). Next, we describe the extrinsic geometry of Lagrangian submanifolds and define their Lagrangian angle, whose constancy is equivalent to minimality (like in the K\"ahler case). This is related to the fact, observed in \cite{Me} (see also \cite{HL2}), that minimal Lagrangian submanifolds of $\D^n$ enjoy a kind of ``Lagrangian calibration" and are therefore extremizers of the volume in their Lagrangian isotopy class (but not in their whole isotopy class). Next, we discuss a number of examples of Lagrangian submanifolds, such as minimal Lagrangian graphs, minimal normal bundles. We also 
characterize Lagrangian surfaces of $\D^2$ which are minimal and have indefinite metric. Finally we describe  the Lagrangian self-similar solutions of the Mean Curvature Flow which are $SO(n)$-equivariant.

\section{Preliminaries}
\subsection{The space $\D^n$} \label{Dn} \
The set $\D$ of \em  para-complex \em (or \em split-complex, \em or \em double\em)  numbers
is the two-dimensional real vector space $\R^2$ endowed with the commutative algebra structure whose product  rule is given by
$$ (x , y)  (x' , y')= (xx'+yy', xy'+x'y).$$
 The number $(0,1)$, whose square is $(1,0)$ and not $(-1,0)$, will be denoted by $\tau.$
It is convenient to use the following notation: $(x,y) \simeq z =x + \tau y .$
In particular, one has the same conjugation operator than in $\C$:
$$ \overline{x+ \tau y } = x - \tau y.$$
Since $ z  \bar{z} = x^2 -y^2,$ it is only natural  to introduce  the Minkowski metric $\<\cdot,\cdot\>=dx^2 - dy^2,$ so that  the squared norm $\<z,z\>$ of $z$ is $ z  \bar{z}$.

We also introduce the \em polar coordinates \em as follows: the \em radius \em of $z \in \D$ is 
$r :=|\<z ,z\>|^{1/2}$. If $z$ is non-null, its \em argument \em $\arg z$ is  defined to be the unique real number $\te$ such that 
$$z =\pm \tau^q r  e^{\tau \te} =\pm \tau^q r (\cosh (\te) + \tau \sinh(\te)), \quad q \in \{0,1\}.$$
Next we define the ``para-Cauchy-Riemann" operators on $\D$ by
$$\frac{\pa }{\pa z}:= \frac{1}{2}\left( \frac{\pa }{\pa x} + \tau \frac{\pa }{\pa y} \right) \quad \quad
\frac{\pa }{\pa \bar{z}}:= \frac{1}{2}\left(\frac{\pa }{\pa x} - \tau \frac{\pa }{\pa y} \right).$$
A map $f$ defined on a domain of $\D$ is said to be \em para-holomorphic \em if it satisfies
$\displaystyle\frac{\pa f}{\pa \bar{z}}=0.$
Observe that this is a hyperbolic equation, so a para-holomorphic map needs not to be analytic, and may be merely continuously differentiable.

\medskip

On the Cartesian $n$-product $\D^n$ with para-complex coordinates $(z_1, \ldots ,z_n)$, we define the canonical  para-K\"ahler structure $(J,\<\cdot,\cdot\>)$ by
$$ J(z_1, \ldots,z_n):=(\tau z_1, \ldots ,\tau z_n)$$
and
$$ \<\cdot,\cdot\>:=\sum_{j=1}^{n} dz_j  d\bar{z}_j = \sum_{j=1}^{n}  dx_j^2-dy_j^2.$$
We also introduce the ``para-Hermitian" form:
$$\<\<\cdot,\cdot\>\> := \sum_{j=1}^{n} dz_j \otimes d\bar{z}_j= \sum_{j=1}^{n}  (dx_j^2-dy_j^2) - \tau  \sum_{j=1}^{n} dx_j \wedge dy_j.$$
In other words we recover $\<\cdot,\cdot\>$ by taking the real part of  $\<\<\cdot,\cdot\>\>$. On the other hand,  its imaginary part is (up to sign) the canonical  symplectic form $\omega$ of $T^* \R^n$ under the natural identification $\D^n \simeq T^* \R^n$. Finally, the three structures (metric, para-complex and symplectic) are related by the formula:
$\omega:=\<\cdot,J\cdot\>.$

\subsection{Para-complex and para-K\"ahler manifolds} \
Let $\M$ be a differentiable manifold. 
An \em almost para-complex structure \em on  $\M$ is a $(1,1)$-tensor $J$ satisfying $J^2=Id$ and such that the  two eigendistributions $Ker(J\mp Id)$ have the same rank. On the other hand, a \em para-complex structure \em is an atlas  on $\M$ whose transition maps are local  bi-para-holomorphic diffeomorphisms of  $\D^n$. Exactly as in the complex case, a para-complex atlas defines an almost-complex structure $J$ by the formula $JX = d\varphi^{-1} \big(  \tilde{J} d\varphi (X)\big)$, where we denoted by $\tilde{J}$ the para-complex structure of $\D^n$ and by $\varphi$  a local chart on $\M$.
The question of under which condition the converse is true, hence  the para-complex version of  Newlander-Nirenberg theorem, is the content of the following

\begin{theo}
The almost para-complex structure $J$ comes from a para-complex if and only if its \em para-Nijenhuis tensor \em $N^J$, defined by $$ N^J(X,Y):=[X,Y]+[JX,JY]-J[JX,Y]-J[X,JY]$$
vanishes.
\end{theo}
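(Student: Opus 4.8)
The overall strategy is to reduce this para-complex Newlander--Nirenberg statement to the Frobenius integrability theorem. The point worth stressing is that, in contrast with the classical complex case, the two eigendistributions of $J$ are \emph{real} subbundles and the para-holomorphicity condition is hyperbolic, so no elliptic PDE theory intervenes.

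\textbf{The easy direction.} Suppose $J$ comes from a para-complex atlas, so that locally $JX = d\varphi^{-1}\big(\tilde J\, d\varphi(X)\big)$, where $\tilde J$ is the canonical (constant) para-complex structure of $\D^n$. One first checks, using only $J^2 = Id$, that $N^J$ is bilinear over $C^\infty(\M)$, hence a genuine tensor, and that it transforms naturally under diffeomorphisms, so that $N^J(X,Y) = d\varphi^{-1}\big(N^{\tilde J}(d\varphi X, d\varphi Y)\big)$. A direct computation with coordinate vector fields on $\D^n$ gives $N^{\tilde J} = 0$, whence $N^J = 0$.

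\textbf{The hard direction.} Assume $N^J = 0$ and set $T^+ := Ker(J - Id)$, $T^- := Ker(J + Id)$, two complementary rank-$n$ subbundles with $T\M = T^+ \oplus T^-$. Since $N^J$ is tensorial it suffices to evaluate it on local frames of $T^+$ and of $T^-$, and a short computation using $J|_{T^\pm} = \pm Id$ gives: $N^J(X,Y) = 2(Id - J)[X,Y]$ for $X,Y \in \Gamma(T^+)$; $N^J(X,Y) = 2(Id + J)[X,Y]$ for $X,Y \in \Gamma(T^-)$; and $N^J(X,Y) = 0$ whenever $X \in \Gamma(T^+)$ and $Y \in \Gamma(T^-)$. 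As $Id - J$ (resp. $Id + J$) vanishes exactly on $T^+$ (resp. $T^-$), the identity $N^J = 0$ is equivalent to the involutivity of \emph{both} $T^+$ and $T^-$. By the Frobenius theorem each of them is then tangent to a foliation, and because the two foliations are transverse and complementary one obtains, around every point, a \emph{bi-foliated} chart $(u^1,\dots,u^n,v^1,\dots,v^n)$ with $T^+ = \mathrm{span}\{\pa/\pa u^j\}$ and $T^- = \mathrm{span}\{\pa/\pa v^j\}$: send a point $q$ to the pair given by the intersection of the $T^-$-leaf through $q$ with a fixed $T^+$-leaf and the intersection of the $T^+$-leaf through $q$ with a fixed $T^-$-leaf, and pull back leaf coordinates on these two fixed leaves. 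Finally define $\varphi := (z^1,\dots,z^n)$ with $z^j := \tfrac{1+\tau}{2}u^j + \tfrac{1-\tau}{2}v^j$; since $\tfrac12(1\pm\tau)$ are the idempotents of $\D$, $d\varphi$ carries $\pa/\pa u^j$ into the $(+1)$-eigenspace and $\pa/\pa v^j$ into the $(-1)$-eigenspace of $\tilde J$, so $d\varphi \circ J = \tilde J \circ d\varphi$.

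\textbf{Conclusion and main obstacle.} To see that these charts form an atlas, note that if $\varphi,\psi$ are two charts constructed as above, then the transition map $h := \psi\circ\varphi^{-1}$ is a diffeomorphism of a domain of $\D^n$ whose differential satisfies $dh\circ\tilde J = \tilde J\circ dh$; such a map is precisely a para-holomorphic one, i.e. $\pa h_k/\pa\bar z_j = 0$ (the para-analogue of the characterization of holomorphic maps, verified directly from the para-Cauchy--Riemann operators of Section~\ref{Dn}), and the same holds for $h^{-1}$, so $h$ is bi-para-holomorphic. Hence $\{\varphi\}$ is a para-complex atlas inducing $J$. The only genuinely non-formal step is the construction of the simultaneous bi-foliated chart from two complementary integrable distributions; everything else is linear algebra together with the Frobenius theorem, and no analysis beyond it is required.
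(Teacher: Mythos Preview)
Your proof is correct and follows essentially the same strategy as the paper's proof in the Appendix: reduce $N^J=0$ to the Frobenius integrability of both eigendistributions $T^\pm = Ker(J\mp Id)$, build a bi-foliated chart $(u^j,v^j)$, and pass to para-complex coordinates via the idempotent change $z^j=\tfrac{1+\tau}{2}u^j+\tfrac{1-\tau}{2}v^j$ (the paper writes this equivalently as $x_i=u_i+v_i$, $y_i=u_i-v_i$, and checks the para-Cauchy--Riemann equations for transitions directly). The only cosmetic difference is the easy direction: you invoke naturality of $N^J$ under charts and the vanishing of $N^{\tilde J}$ for the constant structure on $\D^n$, whereas the paper uses the same decomposition lemma to observe that a para-complex atlas gives integrable foliations, hence involutive $\mathcal D^\pm$, hence $N^J=0$.
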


The proof of this theorem, a consequence of Frobenius theorem, is much simplier than in the complex case. For seek of completness we state it in  Appendix.

Unlike the case of complex manifolds, a para-complex manifold is not necessarily orientable: for example, it is easy to equip the Klein bottle with a para-complex structure: let $\M = \R^2 / \! \!\sim$
where $\sim$ is the equivalence relation defined by $ (u,v) \sim (u+1,v) \sim (1-u, v+1)$. Then the para-complex structure defined  on $\R^2$ by $ J \pa_u= \pa_u$ and $J \pa_v = -\pa_v$ descends to $\M$.

A pair $(J, g)$,  where $J$ is para-complex structure, and $g$  a compatible pseudo-Riemannian metric, is said to be \em a K\"ahler structure \em on $\M$ if the alternated 2-from 
$\omega:=g(J\cdot,\cdot)$ is symplectic, i.e.\ it is closed (the non-degeneracy of $\omega$ follows directly from that of $g$).
 The simplest, non-flat  example of such a para-K\"ahler manifold may be constructed in an analogous way to the complex projective space:  $\D\mathbb P^n$ is the set of two-dimensional subspaces   of $\D^{n+1}$ which are $J$-stable and have non-degenerate (hence indefinite) metric.  Alternatively, $\D\mathbb P^n$ is the quotient of the quadric $\mathbb Q :=\{ \< \cdot,\cdot\>=1\}$ of $\D^{n+1}$ by the Hopf action $ z \mapsto z \cdot e^{\tau t} $. 

\section{Distinguished submanifolds in para-complex geometry}

\subsection{Para-complex submanifolds} \
It is well known that complex submanifolds in $\C^n$ (or more generally in K\"ahler manifolds) are examples of not only minimal, but even calibrated submanifolds. In the para-complex case, some care must be taken to the definition of a para-complex submanifold: given a submanifold $\s$ of a
 para-complex manifold $(\M,J)$, the assumption that the tangent bundle of $\s$ is stable for $J$ is not sufficient, since the restriction of the eigen spaces $Ker(J\mp Id)$ may not have the same rank. Hence a submanifold $\s$ of $(\M,J)$ will be called \em para-complex \em if the restriction of $J$ to $T\s$ is still an almost para-complex structure, i.e.\ $Ker(J\mp Id)|_{T\s}$ have the same rank.
 It turns out that if $\M$ is in addition para-K\"ahler, its para-complex submanifolds  are  minimal, but fail to be calibrated, being always unstable with respect to the volume form.

\begin{theo} Let $(\M, J, g, \omega)$ a para-K\"ahler manifold and $\s$ a non-degenerate submanifold of $\M$ which is para-complex. Then $\s$ is minimal but unstable.
\end{theo}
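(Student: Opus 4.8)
The plan is to split the statement into its two assertions — minimality and instability — and treat them by the standard moving-frame / second-variation machinery adapted to the neutral signature.

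For minimality, I would mimic the Kähler proof. Choose a point $p \in \s$ and a local $J$-adapted orthonormal frame: since $T_p\s$ is non-degenerate and $J$-stable with the rank condition, one can pick tangent vectors $e_1,\dots,e_k$ spanning a maximal positive-definite subspace with $Je_1,\dots,Je_k$ spanning the negative part, so that $\{e_i, Je_i\}$ is a (pseudo-)orthonormal frame of $T_p\s$ with $\<e_i,e_i\>=1$, $\<Je_i,Je_i\>=-1$. The mean curvature vector is $\vec H = \sum \eps_a \, h(f_a,f_a)$ where $h$ is the second fundamental form and $\eps_a = \<f_a,f_a\>=\pm 1$; summing over the adapted frame this is $\sum_i \big( h(e_i,e_i) - h(Je_i,Je_i)\big)$. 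The key identity is that $\nabla J = 0$ on a para-Kähler manifold (compatibility of $J$ with the Levi-Civita connection), together with the fact that $J$ maps $T\s$ to $T\s$, which forces $h(JX,JY) = h(X,Y)$ for all $X,Y$ tangent to $\s$ — exactly as in the Kähler case, because $J(\nabla^\M_X Y) = \nabla^\M_X (JY)$ and one compares normal components. Hence each bracket $h(e_i,e_i)-h(Je_i,Je_i)$ vanishes and $\vec H = 0$.

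For instability, the idea is to exhibit a compactly supported normal variation that strictly decreases the volume, or equivalently to find a normal field $V$ along which the second variation (the index form) of the volume functional is negative. The natural candidate is $V = JX$ for $X$ a tangent vector field on $\s$ (note $JX$ is \emph{normal}: for $X, Y$ tangent one would need to check $\<JX,Y\>$, but here the right object uses $\omega$-orthogonality or a direct splitting argument — more robustly, take $V$ built from $J$ applied to a conformal-type or position-type field). Because $J$ is an isometry for $\omega$ but \emph{anti}-isometric in a certain sense for $g$ (it sends the positive eigendistribution to the negative one), the second-variation formula, which in the Riemannian Kähler case is positive on holomorphic variations, picks up sign changes from the $\eps_a$'s and from $h(JX,JX)=h(X,X)$; plugging $V=JX$ into the index form $\int_\s \big( |\nabla^\perp V|^2 - \langle \tilde R(V,\cdot)\cdot, V\rangle - \langle \mathcal A_V, \mathcal A_V\rangle \big)\,dvol$ and using $\nabla J=0$ plus the para-Kähler curvature identities should collapse it to something manifestly $\le 0$ and not identically $0$, for instance a negative multiple of $\int_\s |h|^2$ or $\int_\s |\nabla X|^2$ for a suitably chosen $X$. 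Concretely on $\D^n$, where the ambient curvature vanishes, the index form reduces to $\int_\s \big(|\nabla^\perp(JX)|^2 - |\mathcal A_{JX}|^2\big)$, and one shows this is negative by choosing $X$ a parallel tangent field (or, if none exists globally, a cutoff of one) so the first term is controlled while the second is strictly positive unless $\s$ is totally geodesic.

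The main obstacle I expect is the instability half, specifically (i) producing an \emph{admissible} (compactly supported, genuinely normal) variation field rather than just a formal candidate, and (ii) handling the indefinite second fundamental form: in neutral signature $|\mathcal A_{V}|^2$ is not sign-definite, so the naive comparison can fail, and one must exploit the para-complex relation $h(JX,JY)=h(X,Y)$ carefully to pair up terms and extract a definite sign. A clean way around both points is to reduce to the local/flat model $\D^n$ (since instability is a local statement — a small compactly supported perturbation suffices) and to take $\s$ itself to be the flat para-complex $k$-plane perturbed by the graph of a well-chosen para-holomorphic-type function, making all curvature terms explicit; then instability becomes a direct second-order Taylor computation of the volume of the graph, which I would not carry out in full here.
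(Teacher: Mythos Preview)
Your minimality argument is correct and essentially identical to the paper's: build a $J$-adapted (pseudo-)orthonormal frame $\{e_i,Je_i\}$, use $\nabla J=0$ together with $J$-invariance of $T\s$ to obtain $h(JX,JY)=h(X,Y)$, and pair the terms in $\vec H$ using $\langle Je_i,Je_i\rangle=-\langle e_i,e_i\rangle$ so that they cancel. No objection there.

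The instability half, however, has a genuine gap. Your candidate variation field $V=JX$ with $X$ tangent is \emph{tangent}, not normal: the very hypothesis that $\s$ is para-complex means $J(T\s)\subset T\s$, so $JX\in T\s$ for every tangent $X$. You half-notice this (``one would need to check $\langle JX,Y\rangle$'') but never resolve it; the subsequent index-form discussion therefore never gets off the ground, and the fallback of ``reducing to the flat model and Taylor-expanding the volume of a graph'' is left uncomputed.

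The paper's route is far shorter and avoids all of this. It simply observes that the induced metric on $\s$ is necessarily \emph{indefinite}: for any non-null tangent $X$, the vector $JX$ is also tangent and satisfies $g(JX,JX)=-g(X,X)$. One then invokes the general fact (proved in \cite{MyB}) that a minimal submanifold whose induced metric is indefinite is automatically unstable. So rather than constructing a specific normal variation by hand, the paper reduces instability to a known lemma about indefinite minimal submanifolds; the para-complex structure is used only to force indefiniteness of the induced metric.
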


\begin{proof}Given  a tangent vector field $X,$ $JX$
is also tangent to $\s$. Moreover, the para-complex structure $J$ is
parallel with respect to the Levi-Civita connection $D$,
 so that $ D_Y JX =J D_Y X.$
On the other hand, since the tangent bundle is $J$-invariant, so
are the normal bundle. It follows that
$$ (D_Y JX)^\perp =(J D_Y X)^\perp=J (D_Y X)^\perp,$$
 i.e.\
 $ h(JX,Y)=Jh(X,Y),$ 
where $h$ denotes the second fundamental form of $\s$, i.e. the two-tensor, valued in the normal bundle, defined by $h(X,Y)= (D_X Y)^\perp$.
 Next, an easy modification of the famous Gram-Schmidt process shows that there exists an
 orthonormal frame $(e_1,\ldots, e_{2k})$ on $\s$ such that $e_{2i}=Je_{2i-1},$ $\forall \ i, \, 1 \leq
i \leq k$ (which proves in particular that the dimension of $\s$ is
even). It follows that $g(e_{2i},e_{2i})=-g(e_{2i-1},e_{2i-1}),$ and we deduce:
\begin{eqnarray*}
2k\vec{H}&=&\sum_{i =1}^{2k} \eps_i h(e_i ,e_i)=\sum_{i =1}^k \eps_{2i-1} h(e_{2i-1}
,e_{2i-1})+\eps_{2i}h(e_{2i}, e_{2i})\\
&=&\sum_{i =1}^k \eps_{2i-1}\Big(h(e_{2i-1} ,e_{2i-1})+h(Je_{2i-1},
Je_{2i-1})\Big)\\
&=&\sum_{i =1}^k \eps_{2i-1}\Big(h(e_{2i-1} ,e_{2i-1})-J^2h(e_{2i-1},
e_{2i-1})\Big)=0. 
\end{eqnarray*}
The unstability of $\s$ comes from the fact that its induced metric is  indefinite: if $X$ is not a null tangent vector, $JX$ is tangent as well and  $g(JX,JX)=-g(X,X)$. It has been proved in \cite{MyB} that a minimal submanifold with indefinite induced metric is always unstable. 
\end{proof}

\subsection{ Lagrangian submanifolds in $\D^n$} 

\begin{lemm} \label{holo-vol} Let $\L$ be a Lagrangian submanifold of $\D^n$ and $(X_1, \ldots, X_n)$ a local tangent frame along $\L$.   
 Then the para-complex number $\det_{\D} (X_1, \ldots, X_n)$ is non null if and only if the induced metric on $\L$ is non-degenerate.  Moreover, if $ (X_1, \ldots, X_n)$ is orthonormal, then  $\det_{\D} (X_1, \ldots, X_n)$ is unit, i.e.\ 
$$\Big| \big\langle \mbox{\em det}_{\D} (X_1, \ldots,X_n) ,\mbox{\em det}_{\D} (X_1, \ldots,X_n)\big\>\Big|=1.$$
Finally, if the metric on $\L$ is non-degenerate, the argument of the para-complex number $\, \det_{\D} (X_1, \ldots, X_n)$ does not depend on the choice of the frame $(X_1, \ldots, X_n)$, but only on the submanifold $\L$. 
\end{lemm}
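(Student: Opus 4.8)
The plan is to exploit the algebraic identity governing how $\det_{\D}$ transforms under change of frame together with the defining property of Lagrangian subspaces. First I would set up the key computation: if $(X_1, \ldots, X_n)$ is a tangent frame along $\L$ and $(Y_1,\ldots,Y_n)$ another one, then $Y_i = \sum_j A_{ij} X_j$ for some real matrix $A = (A_{ij})$ (real, because both frames are real tangent vectors), and by multilinearity and antisymmetry of the $\D$-valued determinant one has $\det_{\D}(Y_1,\ldots,Y_n) = (\det_{\R} A)\,\det_{\D}(X_1,\ldots,X_n)$. Since $\det_{\R} A$ is a nonzero real number, multiplication by it changes $\det_{\D}$ only by a real scalar factor; in polar form $z = \pm \tau^q r e^{\tau\theta}$ this rescales $r$ and possibly flips the overall sign, but leaves the argument $\theta$ and the power $q$ of $\tau$ unchanged (note $\det_{\R} A$ cannot swap the $q=0$ and $q=1$ sectors, as that would require multiplication by $\tau$, which is not real). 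This immediately gives the final assertion, \emph{provided} $\det_{\D}(X_1,\ldots,X_n)$ is non-null so that the argument is defined at all; and it also reduces the non-degeneracy claim to checking it for a single convenient frame.

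Next I would prove the equivalence ``$\det_{\D}(X_1,\ldots,X_n)$ non-null $\iff$ induced metric non-degenerate.'' Here is where the Lagrangian hypothesis enters essentially. The clean approach: decompose each $X_j = a_j + \tau b_j$ with $a_j, b_j \in \R^n$ (the ``real'' and ``imaginary'' parts with respect to the identification $\D^n \simeq \R^n \oplus \tau\R^n$). The Lagrangian condition $\omega(X_i, X_j) = 0$ translates, since $\omega = \sum dx_k \wedge dy_k$, into the symmetry condition $\langle a_i, b_j\rangle_{\R^n} = \langle a_j, b_i\rangle_{\R^n}$ for all $i,j$, i.e.\ the matrix $A B^{t}$ is symmetric where $A$ has rows $a_i$ and $B$ has rows $b_i$. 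On the other hand $\det_{\D}(X_1,\ldots,X_n) = \det_{\D}(a_1+\tau b_1, \ldots)$ expands, using $\tau^2 = 1$, into $\det(A + \tau B)$ in the para-complex sense, which by the standard split-complex-determinant identity equals $\frac{1}{2}\big(\det(A+B) + \det(A-B)\big) + \frac{\tau}{2}\big(\det(A+B) - \det(A-B)\big)$ — using here that $A+B$ and $A-B$ are honest real matrices. Thus $\langle \det_{\D}(X), \det_{\D}(X)\rangle = \big(\tfrac12(\det(A+B)+\det(A-B))\big)^2 - \big(\tfrac12(\det(A+B)-\det(A-B))\big)^2 = \det(A+B)\det(A-B)$. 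So $\det_{\D}(X_1,\ldots,X_n)$ is non-null exactly when $\det(A+B) \neq 0$ and $\det(A-B)\neq 0$.

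It then remains to see that $\det(A+B)\det(A-B) \neq 0$ is equivalent to non-degeneracy of the induced metric. The induced metric has Gram matrix $G_{ij} = \langle X_i, X_j\rangle = \langle a_i, a_j\rangle - \langle b_i, b_j\rangle$, i.e.\ $G = A A^{t} - B B^{t}$. Using the Lagrangian-forced symmetry of $A B^{t}$, I would observe that $(A+B)(A-B)^{t} = A A^{t} - A B^{t} + B A^{t} - B B^{t} = A A^{t} - B B^{t} = G$ (the cross terms cancel precisely because $A B^{t} = (A B^{t})^{t} = B A^{t}$). Hence $\det G = \det(A+B)\det(A-B)$, and the metric is non-degenerate iff this product is nonzero — closing the equivalence and simultaneously re-deriving the displayed formula $\langle \det_{\D}(X),\det_{\D}(X)\rangle = \det G$. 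The unit-norm claim for an orthonormal frame is then immediate: such a frame has $|\det G| = 1$, so $|\langle \det_{\D}(X),\det_{\D}(X)\rangle| = 1$.

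I expect the main obstacle to be purely bookkeeping: getting the split-complex determinant identity $\det_{\D}(A + \tau B) = \tfrac12(\det(A+B) + \det(A-B)) + \tfrac{\tau}{2}(\det(A+B) - \det(A-B))$ stated and justified cleanly (it follows from the idempotents $\tfrac{1\pm\tau}{2}$ diagonalizing $\D$, under which $A+\tau B \mapsto (A+B, A-B)$ and determinant is multiplicative componentwise), and making sure the Lagrangian condition is correctly converted into symmetry of $AB^t$ with the right sign conventions for $\omega$. Once the identity $(A+B)(A-B)^t = G$ is in hand everything collapses, so the conceptual content is light; the care needed is in the signs and in noting that real change-of-frame matrices cannot move between the two $\tau^q$ sectors, which is what makes the argument (not merely its cosine/sine) frame-independent.
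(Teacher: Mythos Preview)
Your proof is correct, and it takes a genuinely different route from the paper's. The paper works directly with the para-Hermitian form: setting $M=[\langle\langle X_i,e_j\rangle\rangle]_{1\le i,j\le n}$, the Lagrangian assumption forces $g_{ij}=\langle X_i,X_j\rangle=\langle\langle X_i,X_j\rangle\rangle$, and then the one-line identity $[g_{ij}]=MM^{*}$ yields $\det_{\R}[g_{ij}]=\det_{\D}M\cdot\overline{\det_{\D}M}=\langle\det_{\D}M,\det_{\D}M\rangle$. You instead pass through the idempotent splitting $\D\cong\R\times\R$ to compute $\det_{\D}(A+\tau B)$ componentwise as $(\det(A+B),\det(A-B))$, and separately verify $(A+B)(A-B)^{t}=G$ using the Lagrangian-forced symmetry of $AB^{t}$. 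The two arguments are secretly the same computation in different coordinates (your $A+\tau B$ is the paper's $M$, and $MM^{*}=G$ becomes $(A+B)(A-B)^{t}=G$ under the idempotent isomorphism, since conjugation interchanges the two real factors), but the emphases differ: the paper's version is shorter and phrased so as to mirror the familiar K\"ahler argument with Hermitian forms, while yours makes the split-algebra structure of $\D$ and the precise role of the Lagrangian condition more visible, at the price of slightly more bookkeeping. Both handle the frame-independence of the argument identically, via the reality of the change-of-basis matrix.
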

This lemma allows us to give a satisfactory definition to the concept of \em Lagrangian angle: \em

\begin{defi} Let $\L$ be a non-degenerate, Lagrangian submanifold of $\D^n$ and $(e_1,\cdots,e_n)$ a local orthonormal tangent frame along $\L$. Then the para-complex number
$ \Omega :=\det_{\D} (e_1, \ldots, e_n)$ is called   \em para-holomorphic volume \em of $\L$. 
The argument $\beta$ of $\Omega$ is called \em Lagrangian angle \em of $\L$.
\end{defi}

\begin{proof}[Proof of Lemma \ref{holo-vol}] We denote by $(e_1, \ldots ,e_n)$ the canonical para-Hermitian basis of $(\D^n, \<\<\cdot,\cdot\>\>)$. 
Observe first that given any two vectors  $X$ and $Y$ of $\D^n$, we have
$$ X = \sum_{i=1}^n \<\< X, e_i\>\> e_i $$
and 
$$ \<\< X,Y \>\> = \sum_{k=1}^n \<\< X, e _k\>\>\<\< e_k, Y\>\> = 
 \sum_{k=1}^n \<\< X, e _k\>\> \overline{\<\< Y, e_k\>\>}.$$
Hence, setting
$M:= [ \<\<X_i, e_j\>\>  ]_{1 \leq i,j \leq n}$,  the coefficients  of the first fundamental form (induced metric) are 
\begin{eqnarray*} 
g_{ij} & := & \< X_i, X_j \> \\
 &=&\<\< X_i, X_j \>\>\\ 
&=& \sum_{k=1}^n \<\< X_i, e _k\>\> \overline{\<\< X_j, e_k\>\>}
\end{eqnarray*}
(we have used the Lagrangian assumption to get the second equality).
It follows that $[g_{ij}]_{1 \leq i,j \leq n} = M M^*$, where $M^*$ denotes the conjugate matrix of $M$. It follows that
 $$\mbox{det}_{\R} [g_{ij}]_{1 \leq i,j \leq n} = \mbox{det}_{\D} M \mbox{det}_{\D} M^* = \mbox{det}_{\D} M \overline{\mbox{det}_{\D} M}=\<\mbox{det}_{\D} M , \mbox{det}_{\D} M\>,$$
 hence the induced metric is degenerate if and only if  the para-complex number $\mbox{det}_{\D} M$ is null. Moreover,  if the frame $(X_1, \ldots, X_n)$ is orthonormal, the matrix is $\det_{\R}[g_{ij}]$ is orthogonal, so that
$$  \< \mbox{det}_{\D}M, \mbox{det}_{\D} M \>=\mbox{det}_{\R} [g_{ij}]_{1 \leq i,j \leq n}  =\pm 1.$$
 In order to conclude the proof it suffices to observe that if $(Y_1,\ldots, Y_n)$ is another local frame along $\L$, with 
$Y_i = \sum_{j=1}^n  a_{ij} X_i$, we have
$$ \mbox{det}_{\D} (Y_1, \ldots, Y_n) =\mbox{det}_{\D} [a_{ij}]_{1 \leq i,j \leq n} \mbox{det}_{\D} (X_1, \ldots, X_n).$$
Since the coefficients $a_{ij}$ are real, the two para-complex determinants above have the same argument.
\end{proof}

The proof of the next lemma can be found in \cite{MyB}:
\begin{lemm} \label{tri} Let $\L$ be a non-degenerate, Lagrangian submanifold of $\D^n$. Denote by $D$ the Levi-Civita connection of the induced metric on $\L$.
Then
the  $$ {T}(X,Y,Z):=\<D_X Y,J Z\>$$ is tensorial and
tri-symmetric, i.e.\ $${T}(X,Y,Z)={T}(Y,X,Z)={T}(X,Z,Y).$$
\end{lemm}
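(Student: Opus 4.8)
The plan is to carry out the three verifications --- tensoriality, symmetry in the first two slots, symmetry in the last two --- separately, relying throughout on the structural facts recorded in Section~1: the ambient connection $D$ of $\D^n$ is flat (hence torsion-free and metric), it parallelizes $J$ (that is, $DJ=0$), and the symplectic form $\omega=\langle\cdot,J\cdot\rangle$ vanishes on $\L$ because $\L$ is Lagrangian. (We read $D$ as this ambient connection; its tangential part along $\L$ is the Levi-Civita connection of the induced metric and its normal part is the second fundamental form $h$.) A preliminary observation simplifies matters: for tangent fields $X,Y,Z$ along $\L$ one has $\langle X,JZ\rangle=\omega(X,Z)=0$, so $JZ$ is \emph{normal} to $\L$; hence only the normal component of $D_XY$ contributes, and $T(X,Y,Z)=\langle h(X,Y),JZ\rangle$.

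From this reformulation tensoriality is immediate, since $h$ is a tensor in $X$ and $Y$ and $J$ is $\R$-linear; alternatively one argues directly, tensoriality in $X$ and $Z$ being obvious and tensoriality in $Y$ following from $D_X(fY)=(Xf)Y+fD_XY$ together with $\langle Y,JZ\rangle=\omega(Y,Z)=0$. Symmetry in the first two arguments is equally quick: $T(X,Y,Z)-T(Y,X,Z)=\langle D_XY-D_YX,JZ\rangle=\langle[X,Y],JZ\rangle=\omega([X,Y],Z)=0$, using that $D$ is torsion-free and that $[X,Y]$ is tangent to $\L$ (equivalently, that $h$ is symmetric).

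The real content is the symmetry $T(X,Y,Z)=T(X,Z,Y)$. I would obtain it by differentiating the Lagrangian identity $\langle Y,JZ\rangle=\omega(Y,Z)\equiv 0$ along $X$: metricity of $D$ and $DJ=0$ give $0=\langle D_XY,JZ\rangle+\langle Y,JD_XZ\rangle$. It then remains to move $J$ out of the second inner product, which is where the para-K\"ahler signs enter: from the definitions of $\langle\cdot,\cdot\rangle$, $J$ and $\omega$ in Section~1 one has $\langle JA,JB\rangle=-\langle A,B\rangle$, hence $\langle A,JB\rangle=-\langle JA,B\rangle$, and therefore $\langle Y,JD_XZ\rangle=-\langle JY,D_XZ\rangle=-\langle D_XZ,JY\rangle$. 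Substituting back yields $T(X,Y,Z)=T(X,Z,Y)$. Since the transpositions $(1\,2)$ and $(2\,3)$ generate $S_3$, full tri-symmetry follows.

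The one step to watch is precisely this last sign computation: in para-K\"ahler geometry $\langle JA,JB\rangle=-\langle A,B\rangle$ carries a minus sign absent in the K\"ahler case, and it is exactly this minus sign that turns the differentiated identity into the \emph{symmetry} $T(X,Y,Z)=T(X,Z,Y)$ rather than an antisymmetry. A minor technical point is to make sure $X,Y,Z$ are extended to genuine tangent vector fields along $\L$ when passing from vectors at a point, but tensoriality (established first) makes the choice of extension irrelevant; everything else reduces to the properties of $D$ and to the Lagrangian condition already in hand.
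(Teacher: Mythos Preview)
Your argument is correct and is exactly the standard proof of this fact; the paper itself does not give a proof but simply refers to \cite{MyB}, where the same computation (observe that $JZ$ is normal so $T(X,Y,Z)=\langle h(X,Y),JZ\rangle$, then differentiate the Lagrangian identity $\omega(Y,Z)=0$) is carried out.

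One small expository remark: your closing comment that the para-K\"ahler sign $\langle JA,JB\rangle=-\langle A,B\rangle$ is what produces \emph{symmetry} rather than antisymmetry is a bit misleading. The identity you actually use, $\langle A,JB\rangle=-\langle JA,B\rangle$, is nothing but the antisymmetry of $\omega$ together with the symmetry of $\langle\cdot,\cdot\rangle$, and it holds verbatim in the K\"ahler case as well; the cubic form is tri-symmetric there too, by the identical argument. So there is no special para-K\"ahler miracle at this step --- the proof is insensitive to whether $J^2=Id$ or $J^2=-Id$.
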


\begin{theo}\label{nH=JB}
  Let $\L$ be a Lagrangian submanifold of $\D^n$ with non degenerate induced metric. Then the Lagrangian angle $\beta$ and the mean curvature vector $\vec{H}$ of $\L$ are related by the formula
	$$ n {\vec{H}} = - J \nabla \beta, $$
where $\nabla$ denotes the  gradient operator on $\L$ with respect to the induced metric.
\end{theo}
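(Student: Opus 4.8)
The plan is to evaluate the ``logarithmic derivative'' $\Omega^{-1}X(\Omega)$ of the para-holomorphic volume along an arbitrary tangent vector field $X$ on $\L$ in two ways and compare. First, fix a local orthonormal tangent frame $(e_1,\ldots,e_n)$ and set $\eps_i:=\<e_i,e_i\>=\pm1$. By Lemma~\ref{holo-vol}, $\Omega=\det_{\D}(e_1,\ldots,e_n)$ is a unit para-complex number; being non-null and varying continuously it stays in one ``sector'', so locally $\Omega=\pm\tau^q e^{\tau\beta}$ with $\pm\tau^q$ locally constant ($\beta$ the Lagrangian angle, whose differential is globally defined, again by Lemma~\ref{holo-vol}). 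Hence $\Omega^{-1}X(\Omega)=\tau\,X(\beta)=\tau\,\<\nabla\beta,X\>$. Moreover, testing the asserted identity against the normal vectors $JX$ ($X$ tangent) and using that $J$ is skew-adjoint, that $J^2=Id$, and that $J(T\L)=N\L$ (which holds precisely because $\L$ is Lagrangian), one sees that $n\vec H=-J\nabla\beta$ is equivalent to the scalar relation $\<\nabla\beta,X\>=\pm n\<\vec H,JX\>$ for all tangent $X$. So it remains to recompute $\Omega^{-1}X(\Omega)$ and extract $\vec H$.

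For that second evaluation I would differentiate the determinant directly. Writing $D^0$ for the flat connection of $\D^n$ (componentwise differentiation) and using that $\det_{\D}$ is $\D$-multilinear, $X(\Omega)=\sum_{i=1}^n\det_{\D}(e_1,\ldots,D^0_Xe_i,\ldots,e_n)$. Now plug in the Gauss decomposition $D^0_Xe_i=D_Xe_i+h(X,e_i)$, where $D$ is the induced Levi--Civita connection and $h$ the second fundamental form. The tangential part $D_Xe_i$ is a \emph{real} combination of $e_1,\ldots,e_n$ whose $e_i$-component equals $\tfrac12\eps_iX\<e_i,e_i\>=0$, so it drops out of each summand. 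For the normal part the key structural observation is that, $\L$ being Lagrangian, $N\L=J(T\L)$, while $J$ acts on $\D^n$ simply as multiplication by $\tau$; hence $h(X,e_i)=\tau\sum_k c_{ik}e_k$ with real $c_{ik}$, so each determinant collapses (only the $k=i$ term survives) and $\Omega^{-1}X(\Omega)=\tau\sum_i c_{ii}$. A short computation using $\<JA,JB\>=-\<A,B\>$ identifies $c_{ii}=-\eps_i\<h(X,e_i),Je_i\>=-\eps_iT(X,e_i,e_i)$, where $T$ is the cubic form of Lemma~\ref{tri}.

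Finally I would invoke the tri-symmetry of $T$ (Lemma~\ref{tri}) to move $X$ into the last slot: $\sum_i\eps_iT(X,e_i,e_i)=\sum_i\eps_iT(e_i,e_i,X)=\<\sum_i\eps_ih(e_i,e_i),JX\>=n\<\vec H,JX\>$, so that $\Omega^{-1}X(\Omega)=-\tau n\<\vec H,JX\>$. Comparing with $\tau\<\nabla\beta,X\>$ gives $\<\nabla\beta,X\>=\pm n\<\vec H,JX\>$ for every tangent $X$, which, by skew-adjointness of $J$, $J^2=Id$, and $J(T\L)=N\L$, rearranges to $n\vec H=-J\nabla\beta$. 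I expect the only genuinely delicate point to be the sign bookkeeping: one must run the conventions of Section~\ref{Dn} through consistently -- the orientation implicit in the definition of $\arg$, the sign linking the para-Hermitian form to $\omega=\<\cdot,J\cdot\>$, and the skew-adjointness of $J$ -- so that the final constant comes out with the sign claimed; everything else is either a direct consequence of the Lagrangian condition (which forces $J$ to carry $T\L$ isomorphically onto $N\L$ by scalar multiplication by $\tau$, making the differentiation of $\det_{\D}$ collapse to a single term) or of the total symmetry of $T$ (which turns the trace $\sum_i\eps_iT(X,e_i,e_i)$ into a contraction against $\vec H$).
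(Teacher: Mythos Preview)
Your argument is essentially the paper's own proof: differentiate the para-holomorphic volume along a tangent direction, use multilinearity of $\det_{\D}$ to reduce to a sum of diagonal coefficients, observe that the tangential (intrinsic Levi--Civita) part drops out because the frame is orthonormal, identify the remaining normal coefficients with the cubic form $T$, and then invoke the tri-symmetry of $T$ (Lemma~\ref{tri}) to trade $T(X,e_i,e_i)$ for $T(e_i,e_i,X)=\<h(e_i,e_i),JX\>$ and extract $\vec H$. The only cosmetic difference is that the paper expands $D_{e_j}e_k$ via the para-Hermitian form $\<\<\cdot,\cdot\>\>$ (so the imaginary part picks out $T$ directly), whereas you write $h(X,e_i)=\tau\sum_k c_{ik}e_k$ using $J=\tau$ explicitly; these are the same computation in different notation.

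Your caveat about signs is well placed: with the paper's conventions ($\omega=\<\cdot,J\cdot\>$, $\<\<\cdot,\cdot\>\>=\<\cdot,\cdot\>-\tau\omega$, and $J$ skew-adjoint), the computation gives $d\beta(X)=n\<\vec H,JX\>$ with a \emph{plus} sign, which then rearranges to $n\vec H=-J\nabla\beta$ exactly as stated; in your intermediate line $\Omega^{-1}X(\Omega)=-\tau n\<\vec H,JX\>$ the minus is one sign off (trace it back to the identity $c_{ii}=-\eps_i\<h(X,e_i),Je_i\>$ versus the $\eps_k\<\<e_k,D_{e_j}e_k\>\>$ the paper uses), but since you already flagged this and hedged with $\pm$, nothing structural is missing.
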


\begin{proof} Let $(e_1, \ldots  , e_n)$ a local
orthonormal frame of $\L.$ The Lagrangian assumption implies that
it is  also a para-Hermitian frame, i.e.\
$$[\<\<e_j,e_k\>\>]_{1 \leq
j,k \leq n}=diag(\eps_1, \ldots,\eps_n),$$
where $\eps_j = \pm 1.$
 In particular, given any
vector $X$ of $\D^n,$ we have
$$ X = \sum_{j=1}^n \eps_j \<\<e_j,X \>\> e_j.$$
 It is sufficient to prove that
$ \< n\vec{H}, J e_j \> = \< \nabla \be , J e_j\>$, i.e.\ $ \<n\vec{H}, \j e_j \> = d\be (e_j).$ Differentiating the identity
$e^{\tau \be}=\Omega(e_1, \ldots ,e_n)$ with respect to the vector
$e_j$, we have
\begin{eqnarray*}
\tau d\be(e_j) e^{\tau \be}&=& \sum_{k=1}^n \Omega(e_1, \ldots , D_{e_j}e_k,  \ldots , e_n)\\
&=& \sum_{k=1}^n \Omega \left(e_1, \ldots , \sum_{l=1}^n \<\<e_l, D_{e_j}e_k \>\> e_l , \ldots , e_n \right)\\
&=&  \sum_{k=1}^n \Omega(e_1, \ldots , \eps_k \<\<e_k, D_{e_j}e_k\>\> e_k , \ldots , e_n)\\
&=& \sum_{k=1}^n \eps_k \<\<e_k,D_{e_j}e_k \>\> e^{\tau \be},
\end{eqnarray*}
hence 
$$\tau d\be (e_j)=\sum_{k=1}^n \eps_k \<\<e_k,D_{e_j}e_k \>\>.$$
Recalling that $\<\<\cdot,\cdot\>\>=\<\cdot,\cdot\> -\tau \omega=\<\cdot,\cdot\> -\tau \<\cdot,J \cdot\>$,
we have
$$\<\<e_k,D_{e_j}e_k\>\>=\< e_k,D_{e_j}e_k\> -\tau \< e_k,JD_{e_j}e_k \>.$$
Differentiating the relation $\<e_k,e_k\>=\eps_k$ in the direction
$e_j$ yields $ \< e_k,D_{e_j}e_k\>=0,$ so that, taking into
account that $\j e_j$ is a normal vector to $\L$,
$$\<\<e_k,D_{e_j}e_k\>\>=\tau \<Je_k,D_{e_j}e_k\>=\tau T( e_j,e_k,e_k)\>. $$
By Lemma \ref{tri},  we deduce that
$$d\be (e_j)=\sum_{k=1}^n \eps_k \<J e_j, h(e_k,e_k)\>= \< J e_j ,  n\vec{H}\>=-\< e_j, n J\vec{H}\>,$$
which, by the very definition of $\nabla$, is equivalent to the claimed formula. \end{proof}

\begin{coro}\label{constantangle}
  Let $\L$ be a Lagrangian submanifold of $\D^n$ with non degenerate induced metric. Then it is minimal if and only if it has constant Lagrangian angle.
\end{coro}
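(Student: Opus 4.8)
The plan is to deduce this corollary directly from Theorem \ref{nH=JB}, which asserts $n\vec H = -J\nabla\beta$. The key point is that $J$ is an invertible endomorphism (indeed $J^2 = Id$), so $J\nabla\beta$ vanishes identically if and only if $\nabla\beta$ vanishes identically; hence $\vec H \equiv 0$ on $\L$ if and only if $\nabla\beta \equiv 0$ on $\L$. It then remains only to translate ``$\nabla\beta \equiv 0$'' into ``$\beta$ is constant'', and ``$\vec H \equiv 0$'' into ``$\L$ is minimal''.

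First I would recall that minimality means, by definition, that the mean curvature vector $\vec H$ vanishes identically; this is the standard notion used throughout the paper (critical point of the volume functional), so no further argument is needed there. Next, given Theorem \ref{nH=JB}, I would apply $J$ to both sides: since $J^2 = Id$, the relation $n\vec H = -J\nabla\beta$ is equivalent to $nJ\vec H = -\nabla\beta$, so $\nabla\beta = 0$ precisely when $\vec H = 0$. Finally, I would invoke the elementary fact that on a connected manifold a function with vanishing gradient (with respect to a pseudo-Riemannian metric; here non-degeneracy of the induced metric is exactly what makes $\nabla$ well-defined and makes $\nabla\beta = 0 \iff d\beta = 0$) is locally constant, hence constant on each connected component.

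The only genuine subtlety — and it is minor — is that $\beta$, the Lagrangian angle, is a priori only locally defined (the argument of a para-complex number is defined up to the ambiguities recorded in the polar-coordinate discussion, and only away from null values). So strictly speaking the statement should be read as: $\L$ is minimal iff $\beta$ is locally constant, equivalently iff $d\beta = 0$. On a connected $\L$ this local constancy propagates to global constancy of any chosen local branch up to the discrete ambiguity; in any case the formula $n\vec H = -J\nabla\beta$ involves only $d\beta$, which is globally well-defined, so the equivalence ``$\vec H = 0 \iff d\beta = 0$'' is unambiguous and that is really the content. I do not anticipate any real obstacle; the corollary is essentially an immediate reading-off of Theorem \ref{nH=JB} once one observes $J$ is invertible.
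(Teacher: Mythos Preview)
Your argument is correct and is exactly the intended one: the paper states this result as an immediate corollary of Theorem~\ref{nH=JB} without writing out a separate proof, and your observation that $J$ is invertible (so $\vec H=0 \iff \nabla\beta=0 \iff d\beta=0$) is precisely the one-line justification implicit in calling it a corollary.
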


\begin{rema} \label{rotation} The isometry $e^{\tau \varphi_0}, \, \varphi_0 \in \R,$ transforms a Lagrangian submanifold with para-holomorphic volume $\Omega$ and Lagrangian angle $\be$ into a Lagrangian submanifold with para-holomorphic volume $\Omega e^{\tau \varphi_0}$ and
Lagrangian angle $\be + n \varphi_0$. Hence, in the study of minimal Lagrangian submanifolds, there is no loss of generality in studying Lagrangian submanifolds with vanishing Lagrangian angle, or equivalently, with para-holomorphic volume satisfying $\Re \Omega=0$  or $\Im \Omega=0$ (unlike  the K\"ahler or para-K\"ahler case, these two equations are not equivalent).

\end{rema}

\begin{rema}
It was proved in \cite{Me} (see also \cite{HL2}) that a minimal Lagrangian submanifold with definite induced metric is a volume extremizer in its Lagrangian homology class.
\end{rema}

\section{Minimal Lagrangian submanifolds in $\D^n$}
\subsection{A local characterization of minimal Lagrangian surfaces with indefinite metric} 
\begin{theo}
Let $\L$ be a minimal Lagrangian surface of $\D^2$ with indefinite induced metric. 
Then $\L$ is the product $\ga_1 \times \j \ga_2 \subset P \oplus \j P $, where  $\ga_1$
and $\ga_2$ two planar curves contained in  a non-Lagrangian (and therefore non-complex) null plane $P.$ 
\end{theo}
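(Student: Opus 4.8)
The plan is to use that, the induced metric being Lorentzian, $\L$ carries local null coordinates, in which minimality forces the immersion to split as a sum $f(u)+g(v)$.

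First I would choose local null coordinates $(u,v)$ on $\L$, so that the induced metric reads $g=2\lambda\,du\,dv$ for a nowhere-vanishing real function $\lambda$, with $\partial_u,\partial_v$ null. A direct computation of the Christoffel symbols gives $\nabla_{\partial_u}\partial_v=0$ for the induced Levi-Civita connection, while the mean curvature vector is $\vec{H}=\lambda^{-1}h(\partial_u,\partial_v)$; hence minimality is equivalent to $h(\partial_u,\partial_v)=0$, and by the Gauss formula the ambient flat connection then satisfies $D_{\partial_u}\partial_v=\nabla_{\partial_u}\partial_v+h(\partial_u,\partial_v)=0$, i.e.\ $\Phi_{uv}=0$, where $\Phi$ denotes the immersion of $\L$ into $\D^2$. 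Therefore $\Phi(u,v)=f(u)+g(v)$ for two curves $f,g$ valued in $\D^2$, with $\Phi_u=f'$ and $\Phi_v=g'$ both null; moreover the Lagrangian hypothesis $\omega(\Phi_u,\Phi_v)=0$ reads $\omega(f'(u),g'(v))=0$ for all $(u,v)$, whence also $\omega(f''(u),g'(v))=\omega(f'(u),g''(v))=0$.

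Next I would record that at each point the four vectors $f'(u),g'(v),Jf'(u),Jg'(v)$ form a basis of $\D^2$: the first two span the (non-degenerate) tangent plane, the last two the normal plane $JT\L$, and the two planes are transverse since the induced metric is non-degenerate. Expanding $f''(u)$ in this basis, pairing with $f'$ kills the $g'$-component, because $\langle f'',f'\rangle=\tfrac12(\langle f',f'\rangle)'=0$ whereas $\langle g',f'\rangle=\lambda\neq0$; pairing with $Jg'$ kills the $Jf'$-component, because $\langle f'',Jg'\rangle=\omega(f'',g')=0$ whereas $\langle Jf',Jg'\rangle=-\lambda\neq0$; and the remaining cross terms $\langle f',Jf'\rangle,\langle f',Jg'\rangle,\langle g',Jf'\rangle$ vanish, being values of $\omega$ on pairs of tangent vectors. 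Hence
$$ f''(u)=\alpha(u,v)\,f'(u)+\beta(u,v)\,Jg'(v) $$
for real functions $\alpha,\beta$, and symmetrically $g''(v)=\tilde\alpha\,g'(v)+\tilde\beta\,Jf'(u)$.

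The crux is then the following. Fixing $v=v_0$, the displayed identity becomes a linear ordinary differential equation $\tfrac{d}{du}f'(u)=\alpha(u,v_0)f'(u)+\beta(u,v_0)\,w$ for the $\D^2$-valued function $f'$, with $w:=Jg'(v_0)$ a \emph{constant} vector; its flow preserves the $2$-plane $P:=\mathrm{span}_{\R}\big(f'(u_0),Jg'(v_0)\big)$, so $f'(u)\in P$ for all $u$, i.e.\ $f$ is a planar curve lying in an affine plane parallel to $P$. Symmetrically $g$ lies in an affine plane parallel to $\mathrm{span}\big(g'(v_0),Jf'(u_0)\big)=JP$. Writing $f=c_1+\gamma_1$ and $g=c_2+J\gamma_2$ with $\gamma_1,\gamma_2$ curves in $P$ and $c_1,c_2$ constant vectors, and translating by $-(c_1+c_2)$, we obtain $\L=\gamma_1\times J\gamma_2\subset P\oplus JP$. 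It remains to check the properties of $P$: it is totally null, since $\langle f',f'\rangle=\langle Jg',Jg'\rangle=0$ and $\langle f',Jg'\rangle=\omega(f',g')=0$; it is non-Lagrangian, since $\omega(f'(u_0),Jg'(v_0))=\langle f'(u_0),g'(v_0)\rangle=\lambda(u_0,v_0)\neq0$; a null plane which is $J$-stable is automatically Lagrangian (for $x,y\in P$ one has $\omega(x,y)=\langle x,Jy\rangle=0$ as $x,Jy\in P$), so $P$ is also non-complex; and $P\cap JP=\{0\}$, because that intersection is $J$-stable while expanding $J\big(af'(u_0)+bJg'(v_0)\big)=aJf'(u_0)+bg'(v_0)$ in the above basis shows that no nonzero vector of $P$ is a $J$-eigenvector. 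Thus the direct sum $P\oplus JP$ is legitimate and the theorem follows.

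The bookkeeping with the adapted frame is routine, but it must be carried out carefully, since it is exactly there that the three hypotheses enter: minimality gives the splitting $\Phi=f(u)+g(v)$, nullity of the curves kills the $g'$-component of $f''$, and the Lagrangian condition kills its $Jf'$-component. The genuinely new point, which I expect to be the main obstacle to foresee, is the reduction to a linear ODE upon freezing a coordinate: the invariance of a single $2$-plane under that flow is what upgrades ``$\L$ is a translation surface'' to ``$\L$ is a product of two planar curves''.
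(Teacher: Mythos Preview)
Your argument is correct and, after the common opening (null coordinates $+$ minimality $\Rightarrow$ translation surface $\Phi=f(u)+g(v)$), it diverges from the paper's proof in an interesting way. The paper proceeds by studying the linear spans $P_1=\mathrm{Span}\{f'(u)\}$ and $P_2=\mathrm{Span}\{g'(v)\}$: the Lagrangian condition gives $P_1\subset P_2^{\omega}$ and $P_2\subset P_1^{\omega}$, which forces $\dim P_i\le 3$; the delicate step is then to exclude $\dim P_1=3$ by normalising $g'=e_0$ via an isometry and analysing the intersection of the null cone with the hyperplane $\{e_0\}^{\omega}$, which turns out to be a union of two null $2$-planes, only one of which meets the non-degeneracy constraint. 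Your route avoids this case analysis entirely: expanding $f''$ in the adapted frame $(f',g',Jf',Jg')$ and using nullity together with the Lagrangian relation $\omega(f',g')\equiv 0$ (and its $u$-derivative) leaves only the $f'$ and $Jg'$ components, whence freezing $v=v_0$ yields a linear ODE whose flow preserves the $2$-plane $P=\mathrm{span}\big(f'(u_0),Jg'(v_0)\big)$. This is cleaner and more conceptual; the paper's approach, on the other hand, makes the obstruction to $\dim P_1=3$ geometrically explicit (the light-cone/hyperplane picture) and does not rely on recognising an invariant plane for an ODE. Your verification that $P$ is null, non-Lagrangian, hence non-complex, and transverse to $JP$ is also fine; note that once you know $(f',g',Jf',Jg')$ is a basis, $P\cap JP=\mathrm{span}(f',Jg')\cap\mathrm{span}(Jf',g')=\{0\}$ follows immediately, so the $J$-eigenvector detour is not needed.
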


\begin{proof}
Since the induced metric is assumed to be indefinite, there exist local null coordinates $(u,v)$ on $\L$, i.e.\ such that the induced metric takes the form $F(u,v) du dv$  (see \cite{We}, \cite{MyB}). It follows that, given a local parametrization of $\L$, 
the mean curvature vector $\vec{H}$ is given by the formula $\pa_{uv} f$ (see \cite{Ch}, \cite{MyB}).
Hence the minimality assumption amounts to the partial differential equation  $\pa_{uv} f=0$.
Therefore, the local parametrization $f$ takes the form
$f(u,v)= \ga_1(u) + \tilde{\ga}_2(v),$ where $\ga_1,\tilde{\ga}_2$ are two null curves of $\D^2$ (i.e., 
$ \<\ga_1',\ga_1'\>$ and $\<\tilde{\ga}'_2,\tilde{\ga}'_2\>$ vanish), and

$$\<\ga'_1(u),\tilde{\ga}'_2(v)\> \neq 0, \quad \forall \, (u,v) \in I_1 \times I_2.$$
On the other hand the Lagrangian assumption is:  
$$\omega(\ga'_1(u),\tilde{\ga}'_2(v)) = 0, \quad \forall \, (u,v) \in I_1 \times I_2.$$
The remainder of proof relies on the analysis of the dimension of the two linear spaces $P_1 := Span \{ \ga'_1(u), u \in I_1 \}$ and $P_2 :=Span \{ \tilde{\ga}'_2(v), v \in I_2 \}.$ We first observe that $\dim P_1, \dim P_2 \geq 1$ and that the case $\dim P_1 = \dim P_2 =1$ corresponds to
the trivial case of $\L$ being planar. 
Since the r\^{o}les of $\ga_1$ and $\tilde{\ga}_2$ are symmetric, we may assume without loss of generality, and we do so, that 
$\dim P_1 > 1$. 

Next, the Lagrangian assumption is equivalent to
 $P_2 \subset P_1^{\omega} $ and $P_1 \subset P_2^\omega,$
so $\dim P_2 \leq \dim P_1^{\omega}$ and $\dim P_1 \leq \dim P_2^\omega $.
By the non-degeneracy of $\omega,$ it follows that
$\dim P_1 \leq \dim P_2^\omega = 4 - \dim P_2 \leq 3.$ We claim that in fact $\dim P_1= 2$. To see this,
assume by contradiction that $\dim P_1 = 3$. It follows that $\dim P_2 \leq \dim P_1^{\omega} =1,$ so 	 the curve $\tilde{\ga}_2$ is a straight line, which may be parametrized as follows: $\tilde{\ga}_2(v)=e_0 v, $ where $e_0$ is a null vector of $\D^2.$
Then ${\ga}_1'$ is contained in the intersection of the light cone $\X^4_{2,0}$ with the hyperplane $\{e_0\}^{\omega}.$  Moreover, $e_0 \notin Ker(J \mp Id)$. Hence, after a possible linear, para-complex, isometry of $\D^2$ we may assume that $e_0=(1,0,0,\pm 1)$. Then an easy computation 
	shows that $\X^4_{2,0} \cap \{e_0\}^{\omega}=\Pi_1 \cup \Pi_2$, where $\Pi_1$ and $\Pi_2$ are two null planes. Moreover, one of these  planes, say $\Pi_2$,	is contained in the metric orthogonal of $e_0$. By the non-degeneracy assumption, 
	$$\<\ga'_1(u),\tilde{\ga}'_2(v)\>_2 = 
	v \<\ga'_1(u), e_0\> \neq 0,$$ 
	we deduce that $\ga'_1 \in \Pi_1,$ which implies that $\dim P_1 \leq 2,$ a contradiction.

To conclude, observe  that,
	$\tilde{\ga}_2 \in P_2 \subset P_1^\omega= \j P_1.$ Hence we just need to set $P:=P_1$ and $\ga_2:=  \j \tilde{\ga}_2,$ to get that
	 $\ga_1, \ga_2 \subset P,$ so that $\L$ takes the required expression. \end{proof}

\subsection{Minimal graphs} \
In this section, we look at the minimal Lagrangian graph equation. It is well known that the graph of a closed one-form of a manifold $\M$ is a Lagrangian submanifold of $T^* \M$. In the case of $\M$ being an open subset $U$ of $\R^n$, it is equivalent, locally (or globally $U$ is simply connected), to look at the graph  of the gradient of a map $u \in C^2(U)$, a Lagrangian in 
$T^*U \subset T^* \R^n \simeq \D^n$. In order to write the minimality condition with respect to the para-K\"ahler metric, we consider the immersion
$$\begin{array}{llll} F : &U  &\longrightarrow& \D^n
 \\  & (x_1, \ldots,x_n)  &\longmapsto& \left(x_1 +\tau \frac{\pa u}{\pa x_1}, \ldots ,x_n +\tau \frac{\pa u}{\pa x_n} \right).
\end{array}$$
Since we have
$$ \frac{\pa F}{ \pa x_i} = 
\left(\tau \frac{\pa^2 u}{\pa x_i \pa x_1}, \ldots , 1 + \tau \frac{\pa^2 u}{\pa x_i^2 },\ldots, \tau \frac{\pa^2 u}{\pa x_i \pa x_n} \right),$$ 
Hence using by Theorem \ref{constantangle}, we obtain
$ \beta = \arg \left( det_{\D} (Id + Hess (u)) \right)$, where $Hess$ denotes the matrix made of the second derivatives of $u$.  We note that   the PDE $ \arg \left( det_{\D} (Id + Hess (u)) \right)= const$ may be elliptic or hyperbolic, depending on the causal character of the para-complex number $ det_{\D} (Id + Hess (u))$.\footnote{The corresponding equation is elliptic in  the K\"ahler case (\cite{HL1}) and hyperbolic  in the pseudo-K\"ahler case (\cite{MyB,An3}).}

 In the case $n=2$, this equation takes a remarkable form: 
$$ \arg \left( 1 + det_{\R}(Hess(u)) + \tau \Delta u) \right)= const.$$
If the para-complex number $1 + det_{\R}(Hess(u)) + \tau \Delta u)$ has positive squared norm, then the induced metric on the Lagrangian $F(U)$ is definite. In this case, the equation $\arg \left( 1 + det_{\R}(Hess(u) + \tau \Delta u) \right)=0$ is equivalent, according to Remark \ref{rotation}, to
   the Laplace equation $\Delta u =0$. If $1 + det_{\R}(Hess(u) + \tau \Delta u)$ has negative squared norm,  the induced metric on  $F(U)$ is indefinite, and according to Remark \ref{rotation}, the equation amounts to the hyperbolic Monge-Amp\`ere equation $ det_{\R}(Hess(u))=-1$.  


\subsection{Minimal Equivariant Lagrangian submanifolds in $\D^n$} \label{2.2} \
In this section we describe explicitely some solutions of the minimal equation which are equivariant.
We define the following immersions 
$$\begin{array}{llll} F : & I \times \S^{n-1} &\longrightarrow& \D^n
 \\  & (s,\si)  & \longmapsto & \ga(s) \iota(\si),
\end{array}$$
where $\ga \subset \D^2$ is a regular,  planar curve with $\ga \neq 0$, and $\iota : \S^{n-1} \longrightarrow \R^n$ denotes the canonical embedding of the sphere. In the following, we shall make the following abuse of notation: $\iota(\si)= \si$.
The image of such an immersion is a Lagrangian submanifold of $\D^n$ which is invariant under the action of $SO(n)$ given by $ x+\tau y \mapsto Ax + \tau Ay, $ where $x, y \in \R^n$ and $A \in SO(n)$. It is proved in \cite{Sa} that if $n \geq 3$, any Lagrangian submanifold which is invariant by this action maybe locally parametrized by such an immersion. This is, however, not true if $n=2$.

We now proceed to calculate the para-holomorphic volume of $X$.
Given $\si \in \S^{n-1}$, we introduce a local orthonormal frame $(e_1, \ldots, e_{n-1})$ in a neighbourhood of $\si$,  which is parallel at $\si$. It follows that the vectors
$$ dF(\pa_s)= F_s = \dot{\ga}(s) \si$$ and
$$ dF(e_i) = \ga(s) d \iota (e_i)=\ga(s) e_i, \quad 1 \leq i \leq n-1$$
span the tangent plane (here and in the following, the dot $\, \dot{}\,$ denotes derivative with respect to the variable $s$).
So it is easily seen that the induced metric is degenerate if and only if $\ga$ is null or takes value in the light cone $\<z,z\>=0$
 and that $\Omega$ is,  up to a real constant, $  \dot{\ga}(s) \ga^{n-1}(s).$ 
In particular, the induced metric is definite or Lorentzian, depending on whether $\ga$ and $\dot{\ga}$ have the same causal character, and the immersion $F$ is minimal if and only if 
$\arg ( \dot{\ga} \ga^{n-1} ) $ is constant. According to Remark \ref{rotation}, it is sufficient to look at the two cases
$ \Re  \dot{\ga} \ga^{n-1}=0$ and $\Im  \dot{\ga} \ga^{n-1}=0$. Integrating these equations, 
 we obtain the algebraic equations $\Re \ga^n= C$  and $\Im \ga^n=C$, where $C$ is a real constant. Since the transformation $z \mapsto Jz$ is an anti-isometry of $\D^n$, it preserves minimality. Accordingly, if $\ga$ is a solution of one of these two equations, so is $\tau \ga$. We observe that in the even case, the solutions are \em invariant \em by the axial symmetry $z \mapsto \tau z$, while in the even case, this symmetry \em exchanges \em solutions of $\Re \ga^n= C$  and $\Im \ga^n=C$ respectively.

\subsubsection*{The case $n=2$} \label{torus}
Here the solutions of the first equation corresponds to  round circles $x^2+y^2 = C$. These circles cross four times the light cone $x^2 = y^2$ (i.e.\ the symmetry lines of $z \mapsto \pm \tau z$).  The resulting surface has equation
$$  (x_1-y_2)^2+(y_1+x_2)^2 = (x_1+y_2)^2 +(x_2-y_1)^2 = C.$$
It is a compact torus with four closed lines of null points and with indefinite metric elsewhere.

\smallskip

The solutions of the second equation are the hyperbolas\footnote{These curves do not have constant curvature with respect to the metric $dx^2-dy^2$.} $2 xy = C$. These curves cross once the light cone $x^2=y^2$. The resulting surface is a cylinder with a circle of null points and with definite metric elsewhere.

\subsubsection*{The case $n=3$} \
The first equation takes the form $x^3+3xy^2= C$. The corresponding curve cross twice the the light cone $x^2=y^2$. The resulting surface is a $\S^2 \times \R$ with two spheres $\S^2$  of null points bounding a cylinder with indefinite metric and two unbounded annuli with definite metric.
The second equation $3x^2y+y^3=C$ is equivalent to the first one.

\subsubsection*{The general case} \
Writing
 $ \ga = \pm \tau^\al r e^{\tau \varphi}$
we have
$$\ga^n = \pm \tau^{n \al} r^n e^{\tau n\varphi} = \pm \tau^{n \al} ( r^n\cosh(n\varphi) + \tau r^n \sinh(n \varphi)).$$
We therefore obtain the general solutions
$$ r= \left( \frac{C}{\cosh(n\varphi)} \right)^{1/n} \quad
\mbox{ or }
\quad r= \left( \frac{C}{\sinh(n\varphi)} \right)^{1/n}.$$
In the first case, as $\varphi \to \pm \infty$, the curve $\ga$ tends to the two points $ \frac{ (2C)^{1/n}}{2}(1 , \pm 1)$ of the light cone, that it touches orthogonally. 

In the second case,  as $\varphi \to + \infty$, the curve $\ga$ tends again to a point of the light cone, that it touches orthogonally, while, when $\varphi \to 0$, the curve is asymptotic to a coordinate axis $x=0$ or $y=0$.

\subsection{Normal bundles} \
The \em normal bundle \em $\ove{\s}$ of a submanifold $\s$ of $\R^n$ is the set 
$$\ove{\s}:= \{ (x,\nu) \in \D^n = \R^n \oplus \R^n   \, | \, \,   x \in \s,  \nu \in N_x \s \}.$$
We recall that a submanifold $\si$ is said to be \em austere \em if, for any normal vector field $\nu$, the corresponding curvatures, i.e. the eigenvalues of the quadratic form
$A_\nu := \< h(\cdot,\cdot),\nu\>$, are symmetrically arranged around zero on the real line.

In the complex case $\C^n$, it has been proved in \cite{HL1} that $\overline{\s}$ is a Lagrangian submanifold, and that it is minimal if and only if $\s$ is austere. Since the symplectic structure of $\D^n$ is the same as that of $\C^n$, this fact that $\overline{\s}$ is Lagrangian still holds true here. It turns out that  the minimality criterion for $\overline{\s}$ is the same in the para-complex case:

\begin{theo}
The Lagrangian submanifold $\overline{\s}$ is minimal with respect to the neutral metric $\<\cdot,\cdot\>$ if and only if $\s$ is austere.
\end{theo}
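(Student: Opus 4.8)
The plan is to compute the para-holomorphic volume $\Omega$ of the normal bundle $\ove{\s}$ directly from a well-chosen moving frame, and then apply Corollary \ref{constantangle}: $\ove{\s}$ is minimal iff its Lagrangian angle $\beta = \arg \Omega$ is constant. First I would fix $x \in \s$ and a normal vector $\nu \in N_x \s$, choose an adapted orthonormal frame $(e_1, \ldots, e_k)$ of $T_x \s$ that diagonalizes the shape operator $A_\nu$ with principal curvatures $\ka_1, \ldots, \ka_k$, and complete it with an orthonormal normal frame $(\nu_1, \ldots, \nu_{n-k})$ of $N_x \s$. Using the local parametrization of $\ove{\s}$ of the form $(x, \nu) \mapsto (x, \nu(x))$ where $\nu$ ranges over normal vectors, and the structure equations of $\s$ in $\R^n$ (Weingarten and Gauss), I would write down the $n$ tangent vectors to $\ove{\s}$ at $(x,\nu)$: roughly $k$ vectors of the shape $e_i \oplus (-\tau A_\nu e_i + \text{terms in the normal bundle connection})$ coming from moving along $\s$, and $n-k$ vectors of the shape $\tau \nu_a$ coming from moving along the fiber. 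The connection terms in the normal bundle contribute only real off-diagonal entries, so they do not affect the argument of the determinant.

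Next I would compute $\det_{\D}$ of this frame. The fiber directions $\tau \nu_a$ contribute a factor $\tau^{\,n-k}$ (times a real unit from reordering $\R^n = \R^k \oplus \R^{n-k}$), and the base directions, modulo the real connection terms which drop out of the argument, contribute $\prod_{i=1}^k (1 - \tau \ka_i)$ up to a real nonzero factor. Hence, up to a real constant,
$$\Omega \ \sim\ \tau^{\,n-k} \prod_{i=1}^{k} (1 - \tau \ka_i).$$
Writing $1 - \tau \ka_i = \pm r_i e^{\pm \tau \te_i}$ in polar form (with $\te_i = \operatorname{arctanh}\ka_i$ when $|\ka_i|<1$, and the analogous hyperbolic-argument expression in the other causal ranges — the sign/$\tau^q$ bookkeeping must be done carefully using the polar-coordinate conventions of Section \ref{Dn}), the argument of $\Omega$ is, up to an additive constant and modulo the discrete contributions, $\beta = \sum_{i=1}^k \arg(1 - \tau \ka_i)$. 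I would then argue that $\beta$ is constant along $\ove{\s}$ — as a function of both $x \in \s$ and $\nu \in N_x\s$ — exactly when the multiset $\{\arg(1-\tau\ka_i(x,\nu))\}$ has constant sum. Because $\nu \mapsto A_\nu$ is linear, scaling $\nu$ scales all $\ka_i$, and a short lemma (strictly monotone behaviour of $\arg(1-\tau t)$ in $t$ away from the light-cone directions, together with the fact that $\arg(1-\tau(-t)) = -\arg(1-\tau t)$) shows the sum is constant in $\nu$ iff the $\ka_i$ are symmetric about $0$ for each fixed $x$ — which is precisely the austere condition. Constancy in the $x$-direction is then automatic once the austere condition holds, since the sum vanishes identically.

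The main obstacle I anticipate is the careful treatment of the polar-coordinate bookkeeping: the para-complex numbers $1 - \tau \ka_i$ change causal character as $\ka_i$ passes through $\pm 1$ (crossing the light cone), so the "argument" is only defined on the non-null locus and picks up discrete $\tau^q$ and sign contributions. I would handle this by first proving the equivalence on the open dense set where no $1 - \tau\ka_i$ is null and all lie in the same causal component, reducing to the clean identity $\beta \equiv \sum_i \operatorname{arctanh}\ka_i$ (mod constants), and then invoking continuity/density of $\ove{\s}$ and of the minimality condition (which is a smooth PDE) to conclude in general; alternatively one can phrase the whole argument in terms of $\Re\Omega$ or $\Im\Omega$ as in Remark \ref{rotation} to avoid mentioning the argument directly. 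A secondary but routine technical point is verifying that the normal-bundle connection terms genuinely contribute only real entries to the frame matrix $M$ (so that, exactly as in Lemma \ref{holo-vol}, they leave $\arg\det_{\D}M$ unchanged); this follows from the fact that the normal connection of $\s \subset \R^n$ is metric, hence its connection forms are real-valued.
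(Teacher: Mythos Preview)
Your proposal is correct and follows essentially the same approach as the paper: both compute the para-holomorphic determinant of the natural frame on $\ove{\s}$ to obtain $\Omega \sim \tau^{\,n-k}\prod_{i=1}^{k}(1-\tau\ka_i)$ and then invoke Corollary~\ref{constantangle}. Your write-up is in fact more careful than the paper's on two points the paper leaves implicit --- the normal-connection terms (the paper simply declares the frame $E_i=(e_i,-t\ka_i e_i)$, which is tangent precisely because the connection contribution lies in the fiber directions and can be subtracted off) and the final ``constant in $t$ iff the $\ka_i$ are symmetric about $0$'' step (the paper just cites \cite{HL1}); your handling of the light-cone crossings via density and continuity is also a reasonable way to tidy up what the paper does not discuss.
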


\begin{proof} Let $x $ be a point of $\s$ and $(\nu_1,\ldots,\nu_{n-p})$ a local orthonormal frame of the normal space $N\s$, defined in a neighbourhood of $x.$ There exists a local orthornormal frame $(e_1, \ldots, e_p)$ in a neighbourhood of $x$ which is principal with respect to $\nu_1$, i.e.\ it diagonalizes $A_{\nu_1}.$ We denote by $\ka_1, \ldots , \ka_p$ the corresponding principal curvatures, i.e.\ $A_{\nu_1} e_i = \ka_i e_i, \, \forall \ i, 1 \leq i \leq p.$ Observe that if $t \in \R,$ 
$A_{t \nu_1} = t A_{\nu_1}$.

We are going to calculate the Lagrangian angle of $\overline{\s}$ at a point $(x,\nu)$. Without loss of generality, be may assume that $\nu=t \nu_1, \, t \in \R$.

We set
$$ E_i(x, \nu):= (e_i, -t \ka_i e_i)  \,\,\,\, \forall i, \, \, 1 \leq i \leq p$$
and
$$ E_{p+j}(x, \nu):=( 0, \nu_j)   \,\,\,\, \forall j, \, \, 1 \leq j \leq n-p.$$
We therefore get a local tangent frame $(E_1, \ldots, E_n)$ along $\overline{\s}$ in a neighbourdhood of $(x,\nu)$. Next, we calculate
$$ det_{\D} (E_1, \ldots,  E_n)=\tau^{n-p} \prod_{i=1}^p (1 -\tau t \ka_i).$$
Using Theorem \ref{constantangle}, we get
$$ \beta(x,\nu)= \arg \left( \tau^{n-p} \prod_{i=1}^p (1 -\tau t \ka_i) \right).$$
Proceeding like in \cite{HL1}, it is easily seen that this number does not depend on $t$ if and only if the principal curvatures $\ka_i$ are symmetrically arranged around zero on the real line, i.e.\ $\s$ is austere.
\end{proof}

\section{Equivariant self-similar Lagrangian submanifolds in $\D^n$} \
In this section, we describe some self-similar Lagrangian solutions of the Mean Curvature Flow (MCF).
The simplest and most important example of a self-similar flow is when the evolution
is a homothety. Such a self-similar submanifold $F$ with mean curvature vector $\vec{H}$
satisfies the following non-linear, elliptic system:
\begin{equation} \label{SS} \vec{H} + \lambda F^\perp = 0,\end{equation}
where $F^\perp$ denotes the projection of the position vector $F$ onto the normal space of the submanifold, and $\lambda$ is a real constant. If $\lambda$ is strictly positive constant, the submanifold shrinks in finite
time to a single point under the action of the MCF, its shape remaining
unchanged. If $\lambda$ is negative, the submanifold will expand, its shape
again remaining the same. In the case of vanishing $\lambda$, we recover the case of minimal submanifolds, which of course are the stationary points of the MCF. We point out that in the para-K\"ahler setting, unlike the K\"ahler case, the shrinking and expanding case are roughly equivalent, since a change of sign in the metric $\<\cdot,\cdot\> \longmapsto -\<\cdot,\cdot\>$ (which can be explicitly performed by  applying the point transformation $z \mapsto Jz$) yields  a change of sign in the mean curvature vector $\vec{H} \longmapsto -\vec{H}$ and leaves unchanged the term $F^\perp$. Hence, without loss of generality, we may assume that $\la \ge 0$.

As in Section \ref{2.2}, we consider the immersions
$$\begin{array}{llll} F : & I \times \S^{n-1} &\longrightarrow& \D^n
 \\  & (s,\si)  &\longmapsto& \ga(s) \iota(\si),
\end{array}$$
where $\ga \subset \D^2$ is a regular,  planar curve with $\ga \neq 0.$ 
We recall that $(e_1, \ldots, e_{n-1})$ is a local orthonormal frame. In particular,
 the vectors
$$ J F_s=\tau \dot{\ga} \si  \, \, \mbox{ and } \, \, \, JdF(e_i)=\tau \ga e_i,  \, \, \,  1 \leq i \leq n-1$$
span the normal space of $F$.
A straightforward \, computation shows that   $\<\vec{H}, JF(e_i)\>$ and $\<F,  JF(e_i)\>$ vanish $\forall \ i, \, \, 1 \leq i \leq n-1$. Hence the  self-similar equation  (\ref{SS}) is equivalent to the scalar equation
\begin{equation} \label{SS-equi} \<\vec{H},J F_s \>  + \lambda \< F^\perp, JF_s \>=0.\end{equation}
Next, we calculate
\begin{eqnarray*} \<\vec{H} , JF_s\> &=& \frac{\< \ddot{\ga}, J \dot{\ga} \>}{\< \dot{\ga},\dot{\ga}\>} - (n-1) \frac{\<\ga,J \dot{\ga}\>}{\<\ga,\ga\>}.  
\end{eqnarray*}
We also have
$$\<F,JF_s\>=\< \ga \si,  J \dot{\ga} \si\>=\<\ga, J \dot{\ga}\>,$$
hence Equation (\ref{SS-equi}) becomes 
\begin{equation}\label{SS-curva} \frac{\<\ddot{\ga}, J \dot{\ga}\>}{\<\dot{\ga},\dot{\ga}\>} 
+ \left(-  \frac{ n-1}{\<\ga,\ga\>}  + \lambda \right)\< \ga, J \dot{\ga}\>.
 \end{equation}
We now assume that $\ga$ is parametrized by ``arclength", i.e.\ $ \< \dot{\ga},\dot{\ga}\> =\pm 1:= \eps'$. 
It follows that $\nu:=J \dot{\ga}$ is a unit normal vector and that the acceleration vector $\ddot{\ga}$ is normal to the curve, hence collinear to $\nu$. 

It follows that there exists $(p',q') \in \{0,1\}^2$ and
 $\te: I \longrightarrow \R$ such that
$$\dot{\ga}(s)= (-1)^{p'} \tau^{q'}  \big(\cosh(\te(s)) + \tau \sinh(\te(s))\big).$$ 
Differentiating, we get
$$ \<\ddot{\ga}, J\ga'\> =(-1)^{q'+1} \dot{\te}=-\eps' \dot{\te}.$$
In order so study Equation (\ref{SS-curva}),  we use ``polar" coordinates:
$$\ga(s)= (-1)^p \tau^q r(s)\big(\cosh(\varphi(s)) + \tau \sinh(\varphi(s))\big)$$
where $(p,q) \in \{0,1\}^2$, \,  $r(s): I \longrightarrow (0,\infty)$ and $\varphi(s): I \to \R$. 
Observe that $ q = \displaystyle\frac{1-\eps}{2}$. Moreover, the induced metric is definite if $q=q'$ and Lorentzian if $q \neq q'$.
Hence Equation $(\ref{SS-curva})$ becomes
\begin{equation}\label{SS'}- \dot{\te} + \left( - \eps \frac{ n-1}{r^2}  + \lambda \right)\< \ga, \nu\> =0.\end{equation}
For the remainder of the analysis of the equation it is convenient to deal separately with the definite and Lorentzian cases:

\subsubsection*{The definite case} \
Differentiating $\ga$ and setting $\al:= \te -\varphi$, we have the following ``compatibility equation":
$$   ( \dot{r} ,  \dot{\varphi} )  =   \eta \big( \cosh \al ,   \frac{1}{r} \sinh \al \big) $$
where we have set for brevity $\eta:=(-1)^{p+p'}$.
On the other hand, a calculation gives
$$\<\ga, \nu\> =   \eta \, \eps \, r \sinh \al .$$
So finally, putting together the compatibility equations and   Equation $(\ref{SS-curva})$, we get a $3 \times 3$ system
$$\left\{ \begin{array}{ccl}  \dot{r} &  =&  \eta   \cosh \al  \\ \dot{\varphi} &=& \eta \frac{1}{r} \sinh \al  \\
\dot{\te} &=&  \eta \left( - \frac{n-1}{r} +  \la' r \right)\sinh \al,
\end{array} \right.   $$
 where we have set for convenience $\la':=\eps \la$.
Observe that the integral curves of the system do not depend on  $\eta$, so without loss of generality we may assume that $\eta=1$. Moreover, recalling that $\al=\te-\varphi$, we are left with the $2 \times 2$ system:
$$\left\{ \begin{array}{ccl}  \dot{r} &  =&     \cosh \al  \\ \dot{\al} &=&    \left( - \frac{n}{r} +  \la' r \right)\sinh \al \end{array} \right.   $$
The system enjoys a first integral: $E(r, \al) =   r^n \exp(- \la' r^2/2) \sinh \al.$ It follows that the projection on the plane $\{ (r, \al) \}$ of the integral curves $(r, \al, \varphi)$ have two ends, one with $ (r, \al) \longrightarrow (  0,\pm \infty)$ and the other with $(r, \al ) \longrightarrow (+ \infty , \pm \infty)$ if $\la'>0$ or $(r, \al ) \longrightarrow (0 , \pm \infty)$ if $\la' \leq 0.$ In order  to understand the asymptotic behaviour of $\ga$, we need to study the variable $\varphi$.
Using the fact that
\begin{eqnarray*} \frac{d\varphi}{dr} =
 \frac{ \sinh \al}{r \cosh \al}=
\frac{1}{r} \left(\frac{r^{2n} \exp(-\la' r^2)}{E^2}+1\right)^{-1/2},
\end{eqnarray*}
we obtain that
\begin{eqnarray*} \varphi(s) = \varphi(0) + \int_0^s \dot{\varphi}(\sigma)d\sigma 
= \varphi(0)+ \int_{r_0}^r \frac{1}{\rho} \left( \frac{\rho^{2n} \exp(-\la' \rho^2)}{E^2}+1 \right)^{-1/2}d \rho. 
\end{eqnarray*}
We first see that $\varphi\sim_{r \sim 0} \log(r)+C$, which implies that  the curve touches orthogonaly the light cone as $r \to 0$. 
On the other hand, as $r \to +\infty$, the situation depends on $\la'$: if the case $\la'>0$, we still have $\varphi \sim_{r \sim +\infty} \log(r)+C$, which implies that as $r \to +\infty$, the curve $\ga$ becomes asymptotically parallel to a branch of the light cone, while if $\la' \leq 0$, $\varphi$ converges to a constant, which means that the curve $\ga$ is asymptotic to a straight line.

\subsubsection*{The Lorentzian case}\
We proceed exactly as in the previous case. Setting $\al:= \te- \varphi$ and $\la':=\eps \la$, we are left with the  $2 \times 2$ system:
$$\left\{ \begin{array}{ccl}  \dot{r}&  =&     \sinh \al  \\ \dot{\al} &=&   \left( - \frac{n}{r} +  \la' r \right)\cosh \al,
\end{array} \right. $$
which enjoys a first integral: $E(r, \al) = 
  r^n \exp(- \la' r^2/2) \cosh \al. $
In order to describe the global behaviour of $\ga$, we come back to the sudy of $\varphi$. Since  we have
\begin{eqnarray*} \frac{d\varphi}{dr} &=&
  \frac{1}{r} \left(1-\frac{r^{2n} \exp(-\la' r^2)}{E^2}\right)^{-1/2}
\end{eqnarray*}
 we obtain
\begin{eqnarray*} \varphi(s) =\varphi(0)+ \int_{r_0}^r \frac{1}{\rho} 
\left(1-\frac{\rho^{2n} \exp(-\la' \rho^2)}{E^2}\right)^{-1/2}d \rho .
 \end{eqnarray*}
In the case $\la' \leq 0$,  the variable $r$ is bounded on  the integral curves $(r, \al, \varphi)$, which have  two ends with $ (r, \al) \longrightarrow (  0 ,\pm \infty)$. We still have  $\varphi\sim_{r \sim 0} \log(r)+C$ so again  the curve $\ga$ touches orthogonally the light cone as $r \to 0$ (in the case $\la'=0$ and $n=2$, we recover the example of the torus found in Section \ref{torus}).

If   $\la' >0$, the phase diagram is a more complicated: it has a critical point $(r_0, \al_0)= ( \sqrt{\frac{n}{\la'}}, 0)$, which corresponds to $\ga$ being a hyperbola (constant curvature curve): if $\eps=-1$, we have  $ \ga(s)=\pm  r_0 ( \cosh(s/r_0), \sinh(s/r_0))$  which is ``self-expanding" since here $\la<0$, and if $\eps=1$, we have
$\ga(s)=\pm  r_0 ( \sinh(s/r_0), \cosh(s/r_0))$, (which is ``self-shrinking"  since $\la>0$). 

The properties of the other solutions depend on the energy level $E_0:=\left(\frac{n}{\la'}\right)^{n/2} \exp(- n^2/2)$:

\begin{itemize}
\item[---] Curves with $E<E_0$ and $r < r_0$. They are symmetric with respect to the $r$ axis and enjoy two ends with $r \to 0$. There are similar to the Lorentzian case with $\la'>0$;

\item[---] Curves with $E<E_0$ and $r >r_0$. They are also symmetric with respect to the $r$ axis and enjoy two ends with $r \to + \infty$ which are asymptotically parallel to the light cone;

\item[---] Curves with $E \geq E_0$; These curves enjoys one end with $r \to 0$, with the curve touching orthogonally the light cone, and another one with $r \to +\infty$, asymptotically parallel to the light cone. 
\end{itemize}

\section*{Appendix: para-complex and almost-para-complex manifolds}\
The purpose of this Appendix is to prove the following statement:

\smallskip

\em An almost para-complex structure $J$ comes from a para-complex structure if and only if its para-Nijenhuis tensor $N^J$ vanishes. \em

\smallskip

\begin{proof} Given a almost para-complex structure $J$, we denote by ${\cal D}^+$ and ${\cal D}^-$ the two corresponding eigen distributions, i.e.\ ${\cal D}^{\pm}:= Ker ( J \mp Id)$. 
 The following lemma, whose  proof we  leave to the reader, which expresses the para-Nijenhuis tensor in terms of the decomposition: 

\begin{lemm} \label{Nijenhuis}
Take two vector fields $X_1$ and $X_2$ on $\M$ and write the decomposition $X_1 =U_1 +V_1$ and
$X_2 =U_2+V_2$ in terms of $T\M={\cal D}^+ \oplus {\cal D}^-$. Then we have
$$N^J (X_1 , X_2 ) = 2 \Big([U_1 , U_2 ] - J [U_1 , U_2 ] + [V_1 , V_2 ] + J [V_1 , V_2 ]\Big).$$
\end{lemm}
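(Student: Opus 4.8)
The plan is to exploit the fact that $N^J$ is bi-additive in its two arguments (being built from the $\R$-bilinear Lie bracket and the $\R$-linear endomorphism $J$, it is in fact $C^\infty(\M)$-bilinear and antisymmetric). Consequently it suffices to evaluate $N^J$ separately on pairs of eigenvector fields of $J$ and then reassemble the result by additivity. First I would record that the pointwise endomorphisms $P^{\pm} := \frac{1}{2}(Id \pm J)$ are smooth bundle projections onto ${\cal D}^{\pm}$ (one checks $(P^{\pm})^2 = P^{\pm}$ using $J^2 = Id$, and $J P^{\pm} = \pm P^{\pm}$), so that for any vector field $X$ the components $U := P^+ X$ and $V := P^- X$ are smooth sections of ${\cal D}^+$ and ${\cal D}^-$ respectively, with $X = U + V$, $JU = U$ and $JV = -V$. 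Applying this to $X_1$ and $X_2$ and expanding $N^J(U_1 + V_1, U_2 + V_2)$ by additivity produces the four terms $N^J(U_1, U_2) + N^J(U_1, V_2) + N^J(V_1, U_2) + N^J(V_1, V_2)$.

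The core of the argument is the direct evaluation of $N^J$ on each pair, using only the eigenvalue relations $JU = U$ and $JV = -V$. On a pair drawn from ${\cal D}^+$ every occurrence of $J$ inside a bracket acts as the identity, so the defining formula collapses to $N^J(U_1, U_2) = 2\big([U_1, U_2] - J[U_1, U_2]\big)$. Symmetrically, on a pair drawn from ${\cal D}^-$ each substitution $JV_i = -V_i$ contributes a sign, and one obtains $N^J(V_1, V_2) = 2\big([V_1, V_2] + J[V_1, V_2]\big)$. For a mixed pair $U \in {\cal D}^+$, $V \in {\cal D}^-$, the two plain bracket terms $[U,V]$ and $[JU, JV] = [U,-V]$ cancel, and the two terms $-J[JU,V]$ and $-J[U,JV]$ cancel likewise, so $N^J(U, V) = 0$; by antisymmetry $N^J(V, U) = 0$ as well. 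Summing the two surviving pure terms reproduces exactly the claimed identity.

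The whole proof is a routine sign-tracking expansion, so there is no genuine obstacle. The only place demanding care is the ${\cal D}^-$ computation: in the term $[JV_1, JV_2]$ the two interior sign flips combine to leave $[V_1, V_2]$ unchanged, whereas in $-J[JV_1, V_2]$ and $-J[V_1, JV_2]$ a single interior flip against the exterior minus sign leaves each as $+J[V_1, V_2]$, so the net coefficient of $J[V_1,V_2]$ is $+2$ rather than $-2$ as in the ${\cal D}^+$ case. Keeping the inside-versus-outside $J$'s distinct is what guarantees the correct opposite signs in front of the two $J$-terms of the final formula.
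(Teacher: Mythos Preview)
Your proof is correct: the bilinear expansion along the eigen-decomposition and the three case computations (pure $\mathcal{D}^+$, pure $\mathcal{D}^-$, and mixed) are exactly the routine verification intended. The paper in fact does not supply a proof of this lemma at all (it explicitly leaves it to the reader), so your argument is precisely the omitted computation.
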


We assume first that $N^J$ vanishes. By Lemma \ref{Nijenhuis},  $X_1 \in {\cal D}^+$ (hence $V_1 = 0$) implies that $[U_1, U_2] = J[U_1, U_2]$, i.e. $[U_1, U_2] \in {\cal D}^+$, i.e.${\cal  D}^+$ is integrable. Analogously, if $X_2 \in {\cal D}^-$, we get that $ [V_1,V_2] \in {\cal D}^-,$ i.e. $ {\cal D}^-$ is integrable.

Hence, in the neighbourhood of any point we can find local coordinates $(u_1, \cdots , u_n, v_1, \cdots, v_n)$ on $\M$ such that the integral submanifolds of
${\cal D}^+$ (resp.\ $ {\cal D}^-$) are $\{v_i = const, 1 \leq i \leq n \}$ (resp.\ $\{u_i = const, 1 \leq
i \leq n\}$). In particular, we have $J \, \pa u_i = \pa u_i$ and $J \, \pa v_i = − \pa v_i$. We define another local system of coordinate we setting
$x_i := u_i+v_i, \, y_i := u_i-v_i $. We 
claim that the collection of all such system of coordinates define an atlas of para-complex coordinates. 
To see this, we need to prove that the transition functions satisfy the para-Cauchy-Riemann equations. Let $(x_1, \ldots, x_n, y_1, \ldots, y_n)$ and $(x_1', \ldots, x_n', y_1' , \ldots, y_n' )$ two local system of coordinates constructed as before.
Let $(u_1, \ldots , u_n, v_1, \ldots, v_n)$ and $(u_1', \ldots, u_n', v_1' , \ldots , v_n' )$ be the associated ``null" coordinates (between quotes because here there is no metric). Since
$$ \{ u_i =const.,  \, \,  1 \leq i \leq n \} = \{ u_i' =const., \, \, \,   1 \leq i \leq n \} $$
and
$$ \{ v_i =const., \, \, \  1 \leq i \leq n \} = \{ v_i' =const., \, \, \  1 \leq i \leq n \}, $$
we have
$$ \frac{\pa u_i'}{\pa v_j}  =  \frac{\pa v_i'}{\pa u_j}=0, \, \, \forall \ i, j, \, \, \, 1 \leq i, j \leq n,$$
which is equivalent to the para-Cauchy-Riemann equations
$$ \frac{\pa x_i'}{\pa x_j}  =  \frac{\pa y_i'}{\pa y_j} \quad \mbox{ and }  \quad \frac{\pa x_i'}{\pa y_j}  =  \frac{\pa y_i'}{\pa x_j}, \, \, \forall \ i, j, \, \, \, 1 \leq i,j \leq n.$$
Conversely, let $\M$ be a manifold equipped with a para-complex structure, i.e.\ an atlas whose transition functions satisfy the para-Cauchy-Riemann equations. This implies the global existence of two foliations: locally, they are defined by  $\{u_i = const,1 \leq i \leq n\}$ and $\{v_i = const,1 \leq i \leq n\}$ and the para-Cauchy-Riemann equations precisely say that the definition is independent of a particular choice of coordinates, and may therefore be extended globally. Denoting by ${\cal D}^+$ and ${\cal D}^-$ the two distributions tangent to this foliations, which are the eigen-spaces of $J$. By Frobenius theorem, $[U_1, U_2] \in {\cal D}^+$ and $[V_1,V_2] \in {\cal D}^-$. By Lemma \ref{Nijenhuis}, $N^J(X_1,X_2)$ vanishes. \end{proof}

\end{document}